\DeclareMathOperator{\ima}{im}
\newcommand{\dC}{\mathbb{C}}
\newcommand{\K}{\mathbb{K}}
\newcommand{\Hc}{\mathcal{H}}
\newcommand{\Ac}{\mathcal{A}}
\newcommand{\Bc}{\mathcal{B}}
\newcommand{\Cc}{\mathcal{C}}
\newcommand{\Dc}{\mathcal{D}}
\newcommand{\Lc}{\mathcal{L}}
\newcommand{\Mc}{\mathcal{M}}
\newlength\@SizeOfCirc%
\newcommand{\CricArrowRight}[1]{%
    \setlength{\@SizeOfCirc}{\maxof{\widthof{#1}}{\heightof{#1}}}%
    \tikz [x=1.0ex,y=1.0ex,line width=.15ex, draw=black]%
        \draw [->,anchor=center]%
            node (0,0) {#1}%
            (0,1.2\@SizeOfCirc) arc (85:-240:1.2\@SizeOfCirc);%
}%
{\left(\begin{smallmatrix}}
{\end{smallmatrix}\right)}
\newenvironment{smallbmatrix}
{\left[\begin{smallmatrix}}
{\end{smallmatrix}\right]}
\tikzset{
block/.style={
  draw, 
  rectangle, 
  minimum height=1.5cm, 
  minimum width=2.5cm, align=center,
  fill=blue!20
  }, 
line/.style={->,>=latex'}
}
\tikzset{negated/.style={
      decoration={markings,
           mark= at position 0.5 with {
               \node[transform shape] (tempnode) {$\backslash\!\!\backslash$};
            }
       },
       postaction={decorate}
    }
}
\newtheorem{definition}{Definition}[section]
\newtheorem{proposition}[definition]{Proposition}
\theoremstyle{definition}
\title{A~geometric framework for discrete time port-Hamiltonian systems}
\author{Karim Cherifi\thanks{Institut f\"{u}r Mathematik, Technische Universität Berlin, Stra\ss e des 17.\ Juni 136, 10623 Berlin, Germany  (\texttt{cherifi@math.tu-berlin.de}).} \and Hannes Gernandt\thanks{Fraunhofer IEG, Fraunhofer Research Institution for Energy Infrastructures and Geothermal Systems IEG, Cottbus, Gulbener Straße 23, 03046 Cottbus, Germany  (\texttt{hannes.gernandt@ieg.fraunhofer.de}).}
\and Dorothea Hinsen\thanks{ Institut f\"{u}r Mathematik, Technische Universität Berlin, Stra\ss e des 17.\ Juni 136, 10623 Berlin, Germany   (\texttt{hinsen@math.tu-berlin.de}).} \and Volker Mehrmann \thanks{Institut f\"{u}r Mathematik, Technische Universität Berlin, Stra\ss e des 17.\ Juni 136, 10623 Berlin, Germany   (\texttt{hinsen@math.tu-berlin.de}).} }
\begin{document}
\maketitle

\begin{abstract}
  Port-Hamiltonian systems  provide an energy-based formulation with a model class that is closed under structure preserving interconnection. 
  For continuous-time systems these interconnections are constructed by geometric objects  called Dirac structures. In this paper, we derive this geometric formulation and the interconnection properties for scattering passive discrete-time port-Hamiltonian systems. 
\end{abstract}

\section{Introduction}
For some major classes of continuous-time (linear) control systems, in particular port-Hamiltonion (pH) systems, it is well-established that staying close to the underlying physics requires a general geometric framework. This has lead to the definition of continuous-time port-Hamiltonian systems via Dirac, Lagrange and monotone structures, see e.g. \cite{GerHR21,MehS23,SchM23,SchM18,SchM20}. In this paper this framework will be extended  to discrete-time descriptor systems \cite{BanV19,Bru09,Dai89,Meh91} of the form
\begin{align}
\label{discr_DAE}
Ex_{k+1}&=Ax_k+Bu_k,\quad y_k=Cx_k+Du_k,\quad k\geq 0,\quad x_0=x^0,
\end{align}
with sequences of vectors $x_k\in\K^n$, $u_k,y_k\in\K^m$, $k=0,1,2,\ldots$, $E,A\in\K^{n\times n}$, $B\in\K^{n\times m}$, $C\in\K^{m\times n}$. Here $\K\in\{\mathbb R,\mathbb C\}$, and we allow a possibly singular $E$.
Discrete-time descriptor systems of the form \eqref{discr_DAE}  are called \emph{scattering passive} if there exists a storage function $V:\K^n\rightarrow[0,\infty)$ satisfying $V(0)=0$ and the \emph{dissipation inequality}
\begin{equation}
 V(Ex_{k+1})-V(Ex_k)\leq \|u_k\|^2-\|y_k\|^2\quad \text{for all $k\geq 0$}   \label{eq:dsPA}
\end{equation}
for all combinations of $u_k\in\K^m$ and $x_0\in\K^n$ for which a solution of \eqref{discr_DAE} exists.

In \cite{CheGHM23} a definition of discrete-time scattering pH descriptor systems was presented for the case that the system is \emph{causal}, i.e. the solution $x_k$ at index $k$ does not depend on future inputs. (This has been characterized in \cite{CheGHM23} by the Kronecker index of the pair $(E,A)$ being at most one). In this case the system can be transformed to a reduced standard discrete-time system 
\begin{align}
\label{discr_ODE}
\tilde x_{k+1}&=\Ac \tilde x_k+\Bc u_k,\quad \tilde y_k=\Cc \tilde x_k+\Dc u_k,\quad k\geq 0,\quad \tilde x_0=\tilde x^0,
\end{align}
with sequences of vectors $\tilde x_k\in\K^r$, $u_k,\tilde y_k\in\K^m$, $k=0,1,2,\ldots$, $\Ac\in\K^{r\times r}$, $\Bc\in\K^{r\times m}$, $\Cc\in\K^{m\times r}$,
together with an algebraic equation that uniquely specifies the remaining $n-r$ components of $x_k$. A causal system of the form~\eqref{discr_DAE} is called \emph{discrete-time scattering pH system} if for the reduced standard system \eqref{discr_ODE} (denoted by $(I_r,\Ac,\Bc,\Cc,\Dc)$), there exists a Hermitian matrix $\mathcal X=\mathcal X^H>0$ such that for the norm weighted with $\textrm{diag}(\mathcal X,I_m)$ the inequality 
\begin{align}
\label{def:scat_pH}
\Big\| \begin{bmatrix}
 \Ac & \Bc\\ \Cc & \Dc
 \end{bmatrix}\Big\|_{\mathcal X} \leq 1
\end{align}
holds.
In \eqref{def:scat_pH} the weight $\mathcal X\in\K^{n\times n}$ defines the Hamiltonian $\Hc$ of the reduced system according to $\Hc(\tilde x):=\tfrac12\tilde x^H\mathcal X \tilde x$, where $\tilde x^H$ denotes the complex conjugate transpose of $\tilde x$.

In this paper, we extend and define the geometric system formulation to scattering passive discrete-time pH systems of the form \eqref{discr_ODE}. Once this is defined, we discuss the interconnection of multiple discrete-time pH systems in analogy to the continuous-time case, where it is known that the interconnection of multiple pH systems results again in a pH system. 
As special case we also discuss norm preserving interconnections.

It is shown in \cite{MehM19,MehU23} that interconnections using Dirac structures not only preserve the passivity of the interconnected system but also the pH system structure. With these results in mind, it is interesting to analyze whether this property holds for the  
scattering pH formulation of discrete-time systems using contractive interconnections. 
Here the difficulty is that we have considered only scattering pH formulations of causal descriptor systems. However, even the interconnection of two standard state-space discrete-time systems may lead to a non-causal discrete-time descriptor system. We therefore restrict ourselves to the case that in the interconnection causality is preserved.

\section{Preliminaries} 
The geometric formulation of discrete-time pH systems will be based on using subspaces $\mathcal{M}$ of $\K^n\times\K^n$ with the following additional structural properties. 
\begin{definition}
A subspace $\mathcal{M}\subseteq\K^n\times \K^n$ is called \emph{contractive} if $\|w\|\leq\|v\|$ holds for all $(v,w)\in\mathcal{M}$.
Furthermore, $\mathcal{M}$ is called \emph{maximally contractive} if $\mathcal{M}$ is not a proper subspace of a contractive subspace of $\K^n\times\K^n$. A subspace $\mathcal{M}\subseteq\K^n\times \K^n$ is called \emph{norm preserving} if $\|v\|=\|w\|$ holds for all $(v,w)\in\mathcal{M}$. 
Furthermore, $\mathcal{M}$ is called \emph{maximally norm preserving} if $\mathcal{M}$ is not a proper subspace of a norm preserving subspace of $\K^n\times\K^n$. A subspace $\mathcal{M}=\ima\begin{smallbmatrix} M_1^H\\ M_2^H
\end{smallbmatrix}$, $M_1,M_2\in\K^{n\times n}$ is called \emph{monotone} if 
\begin{align}
\label{def_mono}
M_2M_1^H+M_1M_2^H\geq 0,
\end{align}
and \emph{maximally monotone} if $\mathcal{M}$ is not a proper subspace of a monotone subspace of $\K^n\times\K^n$.
\end{definition} 

Recently, monotone subspaces were used in the continuous-time pH context \cite{GerHR21,MehS23},   as an extension of results given in \cite{MasS18}, where subspaces $\Mc$ which satisfy \eqref{def_mono} with equality were considered. In this special case, $\mathcal{M}$ is called a \emph{Dirac structure}. Closely related  to this are 
 \emph{Lagrangian subspaces} $\Lc =\ima\begin{smallbmatrix} L_1\\ L_2
\end{smallbmatrix}$, for some  $L_1,L_2\in\K^{n\times n}$ which satisfy $L_2^HL_1=L_1^HL_2$.

In the following, we will focus on contractive subspaces to describe discrete-time pH systems which can be seen as a counterpart of monotone structures for the continuous-time case. More precisely, the relation between monotone and contractive subspaces is established by a subspace variant of the well-known Cayley transformation, see also \cite{KurS07}. 

For a subspace $\mathcal{M}$ of $\K^n\times\K^n$ and for some $\alpha,\beta\in\dC\setminus\{0\}$, the \emph{Cayley transform} is defined by 
\[
\mathcal {C}_{\alpha,\beta}(\mathcal{M}) :=\{(v, w) ~\vert~ (\alpha(v+w),- \beta(v-w))\in\mathcal{M}\}.
\] 
With this, we can give the following characterization of contractive and monotone subspaces.
\begin{proposition}
\label{prop:contractive}
Let $\Mc=\ima\begin{bmatrix}M_1^H\\ M_2^H\end{bmatrix}$ for some  $M_1,M_2\in\K^{n\times r}$ be  contractive. Then the following assertions hold.
\begin{itemize}
\setlength{\itemindent}{2em}
\item[\rm (a)] $\Mc$ is contractive if and only if $M_2M_2^H-M_1M_1^H\leq 0$.
\item[\rm (b)]  For all $\alpha,\beta\in\K\setminus\{0\}$ one has $\mathcal {C}_{\alpha,\beta}(\mathcal{M})=\ima\begin{bmatrix}
\beta M_1^H+\alpha M_2^H\\ \beta M_1^H-\alpha M_2^H
\end{bmatrix}$.
\item[\rm (c)] Let $\alpha\overline{\beta}>0$. If $\Mc$ is monotone then $\mathcal{C}_{\alpha,\beta}(\mathcal{M})$ is contractive. Conversely, if $\Mc$ is contractive and if $\tfrac{\vert\alpha\vert}{\vert\beta\vert}\leq 1$,  then $\mathcal{C}_{\alpha,\beta}(\mathcal{M})$ is monotone.
\item[\rm (d)] $\Mc$ is maximally contractive if and only if $\dim \Mc=n$.
\end{itemize}
\end{proposition}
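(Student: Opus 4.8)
The plan is to dispatch (a) and (b) by a direct computation with a generic element, to derive (c) by combining (a), (b) with elementary positive--semidefinite algebra, and to obtain (d) from an injectivity argument plus the elementary finite--dimensional fact that a linear contraction defined on a subspace of $\K^n$ extends to a contraction on all of $\K^n$. For (a) I would write a generic element of $\Mc$ as $(v,w)=(M_1^Hz,M_2^Hz)$ with $z\in\K^r$, so that $\|v\|^2-\|w\|^2=z^H(M_1M_1^H-M_2M_2^H)z$; hence $\Mc$ is contractive iff $M_1M_1^H-M_2M_2^H\ge0$, i.e.\ $M_2M_2^H-M_1M_1^H\le0$. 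For (b) I would observe that $(v,w)\in\Cc_{\alpha,\beta}(\Mc)$ iff there is $z$ with $\alpha(v+w)=M_1^Hz$ and $-\beta(v-w)=M_2^Hz$; solving this linear relation for $(v,w)$ and rescaling $z$ by the nonzero factor $2\alpha\beta$ identifies $\Cc_{\alpha,\beta}(\Mc)$ with the stated image, and conversely each vector in that image plainly satisfies the defining relation.

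For (c) I would use (b) to write $\Cc_{\alpha,\beta}(\Mc)=\ima\begin{bmatrix}N_1^H\\ N_2^H\end{bmatrix}$ with $N_1^H=\beta M_1^H+\alpha M_2^H$ and $N_2^H=\beta M_1^H-\alpha M_2^H$, so $N_1=\overline\beta M_1+\overline\alpha M_2$ and $N_2=\overline\beta M_1-\overline\alpha M_2$. Expanding, and using that $\alpha\overline\beta>0$ is real, the key identity I expect is
\[
N_2N_2^H-N_1N_1^H=-2\,\alpha\overline\beta\,(M_1M_2^H+M_2M_1^H),
\]
so by (a) the transform $\Cc_{\alpha,\beta}(\Mc)$ is contractive exactly when $M_2M_1^H+M_1M_2^H\ge0$, i.e.\ when $\Mc$ is monotone; this gives the first implication (in fact an equivalence once $\alpha\overline\beta>0$). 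For the converse I would instead compute
\[
N_2N_1^H+N_1N_2^H=2\bigl(|\beta|^2M_1M_1^H-|\alpha|^2M_2M_2^H\bigr),
\]
which is precisely the monotonicity condition for $\Cc_{\alpha,\beta}(\Mc)$; if $\Mc$ is contractive then $M_2M_2^H\le M_1M_1^H$ by (a), and since $M_1M_1^H\ge0$ the assumption $|\alpha|\le|\beta|$ yields $|\alpha|^2M_2M_2^H\le|\alpha|^2M_1M_1^H\le|\beta|^2M_1M_1^H$, hence monotonicity.

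For (d) I would first record that the projection $(v,w)\mapsto v$ is injective on any contractive subspace (if $v=0$ then $\|w\|\le\|v\|=0$ forces $w=0$), so every contractive subspace of $\K^n\times\K^n$ has dimension at most $n$. Consequently, if $\dim\Mc=n$ no contractive subspace can properly contain $\Mc$, so $\Mc$ is maximally contractive. For the converse I would argue by contraposition: if $d:=\dim\Mc<n$, then by the injectivity just noted $\Mc$ is the graph of a contraction $T_0\colon\mathcal V\to\K^n$ on the $d$-dimensional subspace $\mathcal V:=\{v:(v,w)\in\Mc\text{ for some }w\}$; setting $T:=T_0P_{\mathcal V}$, with $P_{\mathcal V}$ the orthogonal projection onto $\mathcal V$, gives $\|Tx\|\le\|P_{\mathcal V}x\|\le\|x\|$, so the graph of $T$ is a contractive subspace of dimension $n>d$ that contains $\Mc$; thus $\Mc$ is not maximal. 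Therefore maximality forces $\dim\Mc=n$.

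I expect (a) and (b) to be routine; in (c) the only care is keeping track of the scalars and noticing that the two directions use different hypotheses ($\alpha\overline\beta>0$ versus $|\alpha|\le|\beta|$). The genuine obstacle is the nontrivial direction of (d), namely showing a contractive subspace of dimension $<n$ can always be enlarged: naively appending a single vector such as $(v_0,0)$ will in general break contractivity, and the clean fix is to recognize $\Mc$ as the graph of a contraction and to use the norm--non-increasing extension $T_0P_{\mathcal V}$.
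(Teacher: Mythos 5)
Your proofs of (a), (b), and (c) follow essentially the same route as the paper: the generic-element computation $(v,w)=(M_1^Hz,M_2^Hz)$ for (a), solving the defining relations and rescaling by $2\alpha\beta$ for (b), and for (c) exactly the two identities
$N_2N_2^H-N_1N_1^H=-2\alpha\overline{\beta}(M_1M_2^H+M_2M_1^H)$ and
$N_1N_2^H+N_2N_1^H=2(|\beta|^2M_1M_1^H-|\alpha|^2M_2M_2^H)$
that the paper uses, with the same positive-semidefiniteness chain for the converse. Your added remark that the first implication in (c) is actually an equivalence when $\alpha\overline{\beta}>0$ is correct and goes slightly beyond what the paper states.

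Part (d) is where you genuinely diverge, and your argument is sound. The paper proves (d) by noting that the Cayley transform preserves dimension (the block matrix in its image representation is invertible), that by (c) it carries contractive subspaces to monotone ones and vice versa, and then invoking an external result (\cite[Lemma 3.6]{GerHR21}) that a monotone subspace of $\K^n\times\K^n$ is maximal iff it is $n$-dimensional. You instead argue directly: the projection $(v,w)\mapsto v$ is injective on any contractive subspace, which caps the dimension at $n$ and gives the ``if'' direction; for the ``only if'' direction you realize $\Mc$ as the graph of a contraction $T_0$ on its domain $\mathcal V$ and extend it to the contraction $T_0P_{\mathcal V}$ on all of $\K^n$, whose graph is an $n$-dimensional contractive subspace properly containing $\Mc$. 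Your version is self-contained and more elementary, and it sidesteps the mild bookkeeping the paper's route requires (checking that the Cayley parameters can be chosen so that both directions of (c) apply, and that maximality is transported both ways); the paper's version, on the other hand, makes the contractive--monotone duality do the work and reuses an existing lemma, which fits the narrative of the section. Both are correct.
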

\begin{proof}
\begin{itemize}
\setlength{\itemindent}{2em}
    \item[(a)]  For all $(v,w)\in\Mc$, there exists some $z\in\K^r$ such that $(v,w)=(M_1^Hz,M_2^Hz)$ holds. Hence, $\Mc$ is contractive if and only if for all $z\in\K^r$ the inequality
\[
z^HM_2M_2^Hz=\|M_2^Hz\|^2=\|w\|^2\leq\|v\|^2=\|M_1^Hz\|^2=z^HM_1M_1^Hz
\]
holds.
\item[(b)] By definition, we have $(v,w)\in \mathcal {C}_{\alpha,\beta}(\Mc)$ if and only if  
\[
\alpha(v+w)=M_1^Hz,\quad -\beta(v-w)=M_2^Hz
\]
holds for some $z\in\K^r$. Hence
\[
v=\frac12(\alpha^{-1}M_1^H+\beta^{-1}M_2^H)z,\quad w=\frac12(\alpha^{-1}M_1^H-\beta^{-1}M_2^H)z
\]
and after multiplication with $2\alpha\beta$ we obtain the formula in (b).
\item[(c)] 
    If $\Mc$ is monotone, then $\alpha\overline{\beta}>0$ yields 
\begin{align*}
&~~~~(\beta M_1^H-\alpha M_2^H)^H(\beta M_1^H-\alpha M_2^H)-(\beta M_1^H+\alpha M_2^H)^H(\beta M_1^H+\alpha M_2^H)\\ &=-\overline{\beta}\alpha M_1M_2^H-\overline{\alpha}\beta M_2M_1^H-\overline{\beta}\alpha M_1M_2^H-\overline{\alpha}\beta M_2M_1^H\\ &=-2\alpha\overline{\beta}(M_1M_2^H+M_2M_1^H)\\ &\leq0.
\end{align*}
Hence, (a) and (b) imply that  $\mathcal{C}_{\alpha,\beta}(\Mc)$ is contractive. Conversely, if $\Mc$ is contractive, then $\alpha\overline{\beta}>0$ and $\tfrac{\vert\alpha\vert}{\vert\beta\vert}\leq 1$ imply 
    \begin{align*}
    &~~~~(\beta M_1^H+\alpha M_2^H)^H(\beta M_1^H-\alpha M_2^H)+(\beta M_1^H-\alpha M_2^H)^H(\beta M_1^H+\alpha M_2^H)\\ &=2\vert\beta\vert^2(M_1M_1^H-\tfrac{\vert\alpha\vert^2}{\vert\beta\vert^2}M_2M_2^H)\\ &\geq 2\vert\beta\vert^2(1-\tfrac{\vert\alpha\vert^2}{\vert\beta\vert^2})M_2M_2^H \\ &\geq 0. 
    \end{align*} 
\item[(d)] Observe that the image representation of the Cayley transformed relation is given by
\begin{align} \beta
\label{eq:cayley_matrix}
\begin{bmatrix}
\tfrac{\beta}{\alpha}M_1^H+M_2^H\\ \tfrac{\beta}{\alpha}M_1^H-M_2^H
\end{bmatrix}=\begin{bmatrix}
\tfrac{\beta}{\alpha}I_n&I_n\\\tfrac{\beta}{\alpha}I_n&-I_n
\end{bmatrix}\begin{bmatrix}
M_1^H\\ M_2^H
\end{bmatrix}.
\end{align}
Since the block matrix on the right-hand side of \eqref{eq:cayley_matrix} is invertible, the dimension of the subspace is preserved under the Cayley transformation. Furthermore, we know from (c) that the Cayley transform relates monotone and contractive subspaces. It is also known that a monotone subspace $\Mc\subseteq\K^n\times\K^n$ is maximal, i.e.\ it is not contained in a monotone subspace of larger dimension, if and only if $\dim\Mc=n$ holds, see \cite[Lemma 3.6]{GerHR21}. Hence, maximally contractive subspaces are mapped to maximally monotone subspaces via the Cayley transformation, where the latter subspaces are $n$-dimensional. Hence assertion (d) follows.
\end{itemize}
\end{proof}

In this section we have introduced the concept of contractive and monotone subspaces and their coordinate representations. In the next section we introduce the geometric formulation of discrete-time port-Hamiltonian systems.

\section{Geometric formulation of discrete-time port-Hamiltonian systems}\label{sec:geometric}

In this section, we introduce a geometric coordinate-free formulation of discrete-time port-Hamiltonian systems. As a motivation, we consider first the special case of pH systems without input and output variables in continuous time. The case of continuous-time \emph{dissipative Hamiltonian (dH) descriptor systems} 
\begin{align}
\label{eq:cnt_dHDAE}
\tfrac{d}{dt} Ex(t)=(J-R)Qx(t), \quad J=-J^H,\quad R=R^H\geq 0,\quad Q^HE=E^HQ
\end{align}
was investigated in \cite{MehMW18}. Based on \cite{MasS18}, it was shown in \cite{GerHR21} that a geometric formulation of \eqref{eq:cnt_dHDAE} is given by 
\begin{align}
\label{cnttime_geo_ph}
(z(t),-\dot z(t))\in\mathcal{M}^{-1}\mathcal{L}=\{(z,w) ~\vert~(w,v)\in \mathcal{M},~ (z,v)\in\mathcal{L}\},
\end{align}
where $z:[0,\infty)\rightarrow \K^n$ is a continuously differentiable function,  $\mathcal{M}$ is a monotone subspace and $\mathcal{L}$ is a Lagrange subspace.

An immediate approach to obtain a discrete-time formulation is to integrate in \eqref{cnttime_geo_ph} using the trapezoidal rule.  
 This leads to the following \emph{geometric formulation of a discrete-time scattering dH system}
\begin{align}
\label{tustin_relation}
\left(\frac{h}{2}(z_k+z_{k+1}),-(z_{k+1}-z_k)\right)\in\mathcal{M}^{-1}\mathcal{L}\quad \Longleftrightarrow \quad (z_k,z_{k+1})\in {\mathcal C}_{\frac{h}{2},1}(\mathcal{M}^{-1}\mathcal{L}).
\end{align}

If we consider for simplicity $\mathcal{L}=\{(x,x) ~|~ x\in\K^n\}$, then  $\mathcal{M}^{-1}\mathcal{L}$ is a monotone subspace, and, invoking Proposition~\ref{prop:contractive}, we find that the subspace ${\mathcal C}_{\frac{h}{2},1}(\mathcal{M}^{-1}\mathcal{L})$ in \eqref{tustin_relation} is contractive.

After this motivation, we give a geometric definition based on the analogy to continuous-time pH descriptor systems in \cite{SchM23}. 
\begin{definition}
\label{def:geom_pH}
A geometric representation of a pH system with state space $\K^n$ and external dimension $m$ is given by a triple $(\mathcal{M}, \mathcal{R})$ consisting of
\begin{itemize}
    \item a maximal norm-preserving subspace $\mathcal{N}\subseteq \K^{n+r+m}\times \K^{n+r+m}$,
    \item a maximal contractive subspace $\mathcal{C}\subseteq \K^r\times\K^r$.
\end{itemize}
By a solution of the discrete-time pH system $(\mathcal{N}, \mathcal{C})$  we understand an input-state-output trajectory $((u_k)_{k\geq 0},(x_k)_{k\geq 0}, (y_k)_{k\geq 0})$ for which there exist sequences $(f_{k,R})_{k\geq 0}$ and $(e_{k,R})_{k\geq 0}$, and such that for all $k \geq  0$ we have
\begin{align}
\label{eq:geom_incl}
(x_{k+1}, f_{k,R}, y_k, x_k, e_{k,R}, u_k)\in\mathcal{N},\quad 
(f_{k,R}, e_{k,R}) \in\mathcal{C}.
\end{align}
\end{definition}

The solutions of a geometric discrete-time pH system satisfy the following power balance equation
\[
\|x_{k+1}\|^2+\|f_{k,R}\|^2+\|y_k\|^2=\|x_k\|^2+\|e_{k,R}\|^2 +\|u_k\|^2,\quad k\geq 0.
\]  
Rearranging terms and employing the contractivity implies that
\begin{align}
\label{eq:almost_passive}
\|x_{k+1}\|^2-\|x_k\|^2\leq \|u_k\|^2-\|y_k\|^2,\quad k\geq 0,
\end{align}
which shows that the geometric definition of discrete-time scattering pH systems leads to scattering passive systems.

The geometric definition introduced in \cite{SchM23} also includes an additional Lagrange structure to generalize the concept of the Hamiltonian. An extension of Definition~\ref{def:geom_pH} in this direction is rather straight forward, but left for future work. Here one introduces Lagrangian effort variables $e_{L,k}$ which must fulfill $(x_{k+1},e_{k,L})\in \mathcal{C}_{\frac{h}{2},1}(\mathcal{L})$ for some $h>0$ and one has to replace $x_k$ in the left hand side expression in \eqref{eq:geom_incl} by $e_{k,L}$.

\section{Contractive interconnection of scattering passive systems}
It is well known that the loss-less interconnection of continuous-time pH systems using Dirac subspaces preserves the pH system structure \cite{MehM19,MehU23}. In this section, we present an analogous interconnection result for discrete-time pH systems. A related approach describing the interconnection of Dirac subspaces of scattering passive systems was given in \cite{CerSB07} and it is based on the use of the \emph{Redheffer star product} and wave variable representations of effort and flow variables.

To describe the system interconnection, we restrict ourselves to the interconnection of two scattering passive systems  given by 
\begin{align}
E_1x_{1,k+1}&=A_1x_{1,k}+B_1 u_{1,k}, &y_{1,k}&=C_1x_{1,k}+D_1u_{1,k}, \label{eq:scat_sys_1} \\ 
E_2x_{2,k+1}&=A_2x_{2,k}+B_2 u_{2,k} & y_{2,k}&=C_2x_{2,k}+D_2u_{2,k} \label{eq:scat_sys_2}
\end{align}
for coefficient matrices of appropriate sizes and where the inputs and outputs of the systems are partitioned as
\[
u_{i,k}=\begin{bmatrix}
    u_{i,k}^1\\ u_{i,k}^2
\end{bmatrix},\quad  y_{i,k}=\begin{bmatrix}
    y_{i,k}^1\\ y_{i,k}^2
\end{bmatrix},\quad B_i=[B_i^1,B_i^2],\quad C_i=\begin{bmatrix}
    C_i^1\\ C_i^2
\end{bmatrix},\quad D_i=\begin{bmatrix}
    D_i^{11}& D_i^{12}\\ D_i^{21}& D_i^{22}
\end{bmatrix}, \quad i=1,2,
\] 
where the components $u_{i,k}^1$ and $y_{i,k}^1$ are available for coupling.
In the following we consider a contractive interconnection $\Mc$ of the scattering passive systems \eqref{eq:scat_sys_1} and \eqref{eq:scat_sys_2}  given by 
\begin{align}
    \label{eq:inter_M}
(u_{1,k}^1,u_{2,k}^1,y_{1,k}^1,y_{2,k}^1)\in\Mc,\quad \left\|\begin{bmatrix}
    y_{1,k}^1\\ y_{2,k}^1
\end{bmatrix}\right\|\leq \left\|\begin{bmatrix}
    u_{1,k}^1\\ u_{2,k}^1
\end{bmatrix}\right\|,\quad k\geq 0.
\end{align}
A special case of such an interconnection \eqref{eq:inter_M} is given by 
\begin{align}
\label{redhfr_connect}
y_{1,k}^1=u_{2,k}^1,\quad u_{1,k}^1=y_{2,k}^1
\end{align}
and was used in \cite{CerSB07} for the interconnection of Dirac structures. It is closely related to the Redheffer star product which was studied in \cite{MisW86,Red62}. 

Then,we have the following main result on structure preserving interconnection. To show that the scattering pH structure is preserved, we restrict to the case $E_1=E_2=I_n$ because of spatial limitations. However, the result can be generalized for descriptor systems having index at most one.
\begin{proposition}
Consider scattering passive systems \eqref{eq:scat_sys_1} and \eqref{eq:scat_sys_2}, i.e.\ \eqref{eq:dsPA} holds, together with an interconnection via \eqref{eq:inter_M}. Then the following holds:
\begin{itemize}
    \item[\rm (a)] The interconnected system given by \eqref{eq:scat_sys_1}, \eqref{eq:scat_sys_2} and \eqref{eq:inter_M}  is scattering passive.
    \item[\rm (b)] If the systems \eqref{eq:scat_sys_1} and \eqref{eq:scat_sys_2} are discrete time scattering pH, i.e.\ \eqref{def:scat_pH} holds, and let $E_1=E_2=Id$ and $I-D_{1}^{11}D_2^{11}$ be invertible, then the interconnected system~\eqref{redhfr_connect}  is equivalent to a discrete-time pH system.
    \item[\rm (c)] Let \eqref{eq:scat_sys_1} and \eqref{eq:scat_sys_2} be scattering pH, i.e.\ \eqref{def:scat_pH} holds for some positive definite $\mathcal{X}_1,\mathcal{X}_2>0$, and fulfill $E_1=E_2=Id$. If $u_{i,k}=u_{i,k}^1$ and $y_{i,k}=y_{i,k}^1$ holds for $i=1,2$ and all $k\geq 0$, then the interconnection \eqref{redhfr_connect} leads to a contractive closed loop system of the form 
    \[
 \begin{bmatrix}
x_{1, k+1}\\x_{2,k+1}
\end{bmatrix}=\hat A\begin{bmatrix}
x_{1,k}\\x_{2,k}
\end{bmatrix},\quad \|\hat A z\|_{\hat{\mathcal{X}}}\leq \|z\|_{\hat{\mathcal{X}}},\quad\quad  \text{where}\quad \|z\|_{\hat{\mathcal{X}}}:=\left\| \begin{bmatrix}\mathcal{X}_1&0\\0& \mathcal{X}_2\end{bmatrix}z\right\|.
    \]
\end{itemize}
\end{proposition}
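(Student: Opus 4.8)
The plan is to derive all three parts from one per-step energy balance: \emph{add} the dissipation inequalities \eqref{eq:dsPA} (resp.\ the pH inequalities \eqref{def:scat_pH} written in balance form) of the two subsystems and let the contributions of the internal coupling ports cancel against each other. For (a), fix storage functions $V_1,V_2$ for \eqref{eq:scat_sys_1} and \eqref{eq:scat_sys_2}, and take $V:=V_1+V_2$ together with $E:=\diag(E_1,E_2)$ as the data of the interconnected system; then $V\geq 0$ and $V(0)=0$. Along any solution, add the two inequalities \eqref{eq:dsPA}, split $\|u_{i,k}\|^2=\|u_{i,k}^1\|^2+\|u_{i,k}^2\|^2$ and $\|y_{i,k}\|^2=\|y_{i,k}^1\|^2+\|y_{i,k}^2\|^2$, and invoke the contractivity of the interconnection subspace $\Mc$ from \eqref{eq:inter_M} to bound the coupling contribution $\|(u_{1,k}^1,u_{2,k}^1)\|^2-\|(y_{1,k}^1,y_{2,k}^1)\|^2$; what remains is $V(Ex_{k+1})-V(Ex_k)\leq\|(u_{1,k}^2,u_{2,k}^2)\|^2-\|(y_{1,k}^2,y_{2,k}^2)\|^2$, i.e.\ \eqref{eq:dsPA} for the interconnected system with external input $(u_{1,k}^2,u_{2,k}^2)$ and output $(y_{1,k}^2,y_{2,k}^2)$.

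For (b), the first task is to make the feedback interconnection \eqref{redhfr_connect} well posed. Substituting \eqref{redhfr_connect} into the output equations of \eqref{eq:scat_sys_1}--\eqref{eq:scat_sys_2} and collecting the coupling inputs $u_{1,k}^1,u_{2,k}^1$ leaves an algebraic system whose coefficient matrix is $I-D_2^{11}D_1^{11}$, which is invertible if and only if $I-D_1^{11}D_2^{11}$ is (Sylvester's determinant identity). Under the hypothesis of (b) we may therefore solve for $u_{1,k}^1,u_{2,k}^1$ as linear functions of $x_{1,k},x_{2,k},u_{1,k}^2,u_{2,k}^2$ and substitute back into the state equations, obtaining a standard state-space system \eqref{discr_ODE} with state $z_k=(x_{1,k},x_{2,k})$ and matrices $(\hat A,\hat B,\hat C,\hat D)$; since the coupling variables are resolved statically, the interconnected descriptor system has Kronecker index at most one, hence is causal, and $(\hat A,\hat B,\hat C,\hat D)$ is its reduced standard form.

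The second task in (b) is to verify \eqref{def:scat_pH} for $(\hat A,\hat B,\hat C,\hat D)$ with the block-diagonal weight $\hat{\mathcal X}:=\diag(\mathcal X_1,\mathcal X_2)>0$. Writing \eqref{def:scat_pH} for each subsystem in balance form $\|\mathcal X_i x_{i,k+1}\|^2+\|y_{i,k}\|^2\leq\|\mathcal X_i x_{i,k}\|^2+\|u_{i,k}\|^2$, adding the two, splitting norms into coupling and external parts as in (a), and using that \eqref{redhfr_connect} is \emph{norm preserving} (so $\|(y_{1,k}^1,y_{2,k}^1)\|=\|(u_{1,k}^1,u_{2,k}^1)\|$) to cancel the coupling terms exactly, one arrives at $\|\hat{\mathcal X}z_{k+1}\|^2+\|(y_{1,k}^2,y_{2,k}^2)\|^2\leq\|\hat{\mathcal X}z_k\|^2+\|(u_{1,k}^2,u_{2,k}^2)\|^2$, which is precisely $\Big\|\begin{bmatrix}\hat A&\hat B\\\hat C&\hat D\end{bmatrix}\Big\|_{\hat{\mathcal X}}\leq 1$; hence the interconnection is equivalent to a discrete-time scattering pH system. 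Part (c) is then the degenerate case of (b) with empty external ports, $u_{i,k}=u_{i,k}^1$, $y_{i,k}=y_{i,k}^1$: here $\hat B,\hat C,\hat D$ vanish, the reduced system is the autonomous recursion $z_{k+1}=\hat A z_k$, and the balance of the previous step collapses to $\|\hat{\mathcal X}z_{k+1}\|^2\leq\|\hat{\mathcal X}z_k\|^2$, i.e.\ $\|\hat A z\|_{\hat{\mathcal X}}\leq\|z\|_{\hat{\mathcal X}}$ with $\|z\|_{\hat{\mathcal X}}=\|\diag(\mathcal X_1,\mathcal X_2)z\|$, as claimed.

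I expect the crux to be the resolution of the algebraic feedback loop in (b)--(c): even though both subsystems are here standard (index-zero) systems, their feedback interconnection is a priori a descriptor system, and without the invertibility of $I-D_1^{11}D_2^{11}$ the loop need not be uniquely resolvable, in which case the interconnected system fails to be causal and $\hat A$ is not even well defined — this is exactly the loss-of-causality phenomenon noted in the introduction, and the reason the structure-preservation claim is confined to that case. Everything else is routine energy bookkeeping, the only point requiring care being the normalization: the weighted norm in \eqref{def:scat_pH} is the one induced by $\diag(\mathcal X,I_m)$, so it is $\mathcal X_i$ itself (not $\mathcal X_i^{1/2}$) that multiplies the state in all balance inequalities, which is why $\hat{\mathcal X}=\diag(\mathcal X_1,\mathcal X_2)$ is the correct weight for the interconnected system.
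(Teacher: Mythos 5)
Your proposal is correct and follows essentially the same route as the paper: (a) by summing the two dissipation inequalities and absorbing the coupling terms via the contractivity of $\Mc$, (b) by resolving the algebraic loop through the invertibility of $I-D_1^{11}D_2^{11}$ (equivalently $I-D_2^{11}D_1^{11}$) to obtain a reduced standard system and then verifying \eqref{def:scat_pH} for $\hat{\mathcal X}=\diag(\mathcal X_1,\mathcal X_2)$ by the same energy balance, and (c) as the specialization with no external ports. The only cosmetic difference is that the paper writes out the interconnected descriptor-form and reduced matrices explicitly, while you argue at the level of the balance inequalities.
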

\begin{proof}
 If we consider a kernel representation $\Mc=\ker[M_1^1,M_1^2,M_2^1,M_2^2]$ for some matrices of appropriate sizes, then the interconnected system can be written as \hspace{-3cm}
\begin{align} 
 \mathbf{A}&=\begin{bmatrix}
 A_1&0&B_1^1&0&0&0\\0&A_2&0&B_2^1&0&0\\ C_1^1&0&D_1^{11}&0&-I&0\\0&C_2^1&0&D_2^{11}&0&-I\\0&0&M_1^1&M_1^2&M_2^1&M_2^2   
\end{bmatrix},& \mathbf{B}&=\begin{bmatrix}
    B_1^2&0\\0&B_2^2\\D_1^{12}&0\\0& D_2^{12}\\0&0
\end{bmatrix},&
\mathbf{x}_k&=\begin{bmatrix}
    x_{1,k}\\x_{2,k}\\u_{1,k}^1\\ u_{2,k}^1\\ y_{1,k}^1\\ y_{2,k}^1
\end{bmatrix}, &\mathbf{E}&=\begin{bmatrix}
    E_1&0&0\\0&E_2&0\\0&0&0
\end{bmatrix}& &\nonumber \\  \mathbf{C}&=\begin{bmatrix}
    C_1^2&0&D_1^{21}&0&0&0 \\0&C_2^2&0&D_2^{21}&0&0
\end{bmatrix}, &\mathbf{D}&=\begin{bmatrix}
    D_{1}^{22}&0\\0&D_2^{22}
\end{bmatrix}, &\mathbf{u}_k&=\begin{bmatrix}
    u_{1,k}^2\\u_{2,k}^2
\end{bmatrix} , &\mathbf{y}_k&=\begin{bmatrix}
    y_{1,k}^2\\y_{2,k}^2
\end{bmatrix}.
\end{align}
Since the systems \eqref{eq:scat_sys_1} and \eqref{eq:scat_sys_2} are assumed to be scattering passive, there exists storage functions $V_1$ and $V_2$ such that the following holds
\begin{align}
    \label{eq:two_diss_ineq}
V_i(E_ix_{i,k+1})-V_i(E_ix_{i,k})\leq \|u_{i,k}\|^2-\|y_{i,k}\|^2,\quad i=1,2.
\end{align}
Furthermore, by the contractive interconnection \eqref{eq:inter_M} we have
\[
\|y_{1,k}^1\|^2+\|y_{2,k}^1\|^2=\left\|\begin{bmatrix}
    y_{1,k}^1\\ y_{2,k}^1
\end{bmatrix}\right\|^2\leq \left\| \begin{bmatrix}
    u_{1,k}^1\\ u_{2,k}^1
\end{bmatrix}\right\|^2=\|u_{1,k}^1\|^2+\|u_{2,k}^1\|^2.
\]
Adding the inequalities \eqref{eq:two_diss_ineq} and using $\mathbf{V}(\mathbf{E}\mathbf{x}_k):=V_1(E_1x_{1,k})+V_2(E_2x_{2,k})$ yields
\begin{align*}
    \mathbf{V}(\mathbf{E}\mathbf{x}_{k+1})-\mathbf{V}(\mathbf{E}\mathbf{x}_{k})\leq 
 \sum_{i,j=1}^2\|u_{i,k}^j\|^2-\|y_{i,k}^j\|^2 \leq \|u_{1,k}^2\|^2+\|u_{2,k}^2\|^2-\|y_{1,k}^2\|^2-\|y_{2,k}^2\|^2
 =\|\mathbf{u}_k\|^2-\|\mathbf{y}_k\|^2,
\end{align*}
which is the dissipation inequality for the interconnected system with respect to the scattering supply rate and the quadratic storage function $\mathbf{V}$. Hence, the interconnected system is scattering passive, which proves (a).

We continue with the proof of (b) and consider the interconnection  \eqref{redhfr_connect} of the systems \eqref{eq:scat_sys_1} and \eqref{eq:scat_sys_2} which is given by
\begin{align}
\mathbf{A}&=\begin{bmatrix}
 A_1&0&B_1^1&0\\0&A_2&0&B_2^1\\ C_1^1&0&D_1^{11}&-I\\0&C_2^1&-I&D_2^{11}
\end{bmatrix},&  \mathbf{B}&=\begin{bmatrix}
    B_1^2&0\\0&B_2^2\\D_1^{12}&0\\0& D_2^{12}
\end{bmatrix},&
\mathbf{x}_k&=\begin{bmatrix}
    x_{1,k}\\x_{2,k}\\u_{1,k}^1\\ u_{2,k}^1
\end{bmatrix},& \mathbf{y}_k=\begin{bmatrix}
    y_{1,k}^2\\y_{2,k}^2
\end{bmatrix}, \nonumber\\ \mathbf{C}&=\begin{bmatrix}
    C_1^2&0&D_1^{21}&0 \\0&C_2^2&0&D_2^{21}
\end{bmatrix}, &\mathbf{D}&=\begin{bmatrix}
    D_{1}^{22}&0\\0&D_2^{22}
\end{bmatrix}, &  \mathbf{E}&=\begin{bmatrix}
    E_1&0&0\\0&E_2&0\\0&0&0
\end{bmatrix}, &\mathbf{u}_k=\begin{bmatrix}
    u_{1,k}^2\\u_{2,k}^2
\end{bmatrix}.
\end{align}

It was shown in \cite{MisW86} that the invertibility of $I-D_1^{11}D_2^{11}$ is equivalent to that of $I-D_2^{11}D_1^{11}$ and also to the following condition
\[
\ker\begin{bmatrix}
    D_1^{11}& -I \\-I & D_2^{11}
\end{bmatrix}=\{0\}.
\]
Therefore, we can rewrite
\[
\begin{bmatrix} u_{1,k}^1\\ u_{2,k}^1\end{bmatrix}=-\begin{bmatrix}
     D_1^{11}&-I\\  -I&D_2^{11}
\end{bmatrix}^{-1}\begin{bmatrix}
     C_1^1&0\\  0&C_2^1
\end{bmatrix}\begin{bmatrix}x_{1,k}\\ x_{2,k}\end{bmatrix}.
\]

Hence, the interconnected system is equivalent to 
\begin{align*}
\hat{\mathbf{A}}&:=\begin{bmatrix}
     A_1&0\\  0&A_2
\end{bmatrix}-\begin{bmatrix}
     B_1^1&0\\  0&B_2^1
\end{bmatrix} \begin{bmatrix}
     D_1^{11}&-I\\  -I&D_2^{11}
\end{bmatrix}^{-1}\begin{bmatrix}
     C_1^1&0\\  0&C_2^1
\end{bmatrix}
,&\hat{\mathbf{E}}:&=\begin{bmatrix}
    E_1&0\\0&E_2
\end{bmatrix}, \\
\hat{\mathbf{B}}&:=\begin{bmatrix}
    B_1^2&0\\0& B_2^2
\end{bmatrix}-\begin{bmatrix}
     D_1^{11}&-I\\  -I&D_2^{11}
\end{bmatrix}^{-1}\begin{bmatrix}
    D_1^{12}&0\\0& D_2^{12}
\end{bmatrix}, &\hat{\mathbf{x}}_k&=\begin{bmatrix}
    x_{1,k}\\x_{2,k}
\end{bmatrix},
\\ \hat{\mathbf{C}}&:=\begin{bmatrix}
    C_1^2&0 \\0&C_2^2
\end{bmatrix}-\begin{bmatrix}
    D_1^{21}&0\\ 0&D_2^{21}
\end{bmatrix}\begin{bmatrix}
     D_1^{11}&-I\\  -I&D_2^{11}
\end{bmatrix}^{-1}\begin{bmatrix}
     C_1^1&0\\  0&C_2^1
\end{bmatrix},&\hat{\mathbf{y}}_k&=\begin{bmatrix}
    y_{1,k}^2\\y_{2,k}^2
\end{bmatrix},\\ \hat{\mathbf{D}}&:=\begin{bmatrix}
    D_{1}^{22}&0\\0&D_2^{22}
\end{bmatrix}-\begin{bmatrix}
    D_1^{21}&0\\0& D_2^{21}
\end{bmatrix}\begin{bmatrix}
     D_1^{11}&-I\\  -I&D_2^{11}
\end{bmatrix}^{-1}\begin{bmatrix}
    D_1^{12}&0\\0&D_2^{12}
\end{bmatrix},&\hat{\mathbf{u}}_k&=\begin{bmatrix}
    u_{1,k}^2\\u_{2,k}^2
\end{bmatrix}.
\end{align*}
Assertion (a) yields that the interconnected system is scattering passive and since \eqref{def:scat_pH} is valid for each of the systems, the following holds
\[
\|\hat{\mathbf{A}}\hat{\mathbf{x}}_k+\hat{\mathbf{B}}\hat{\mathbf{u}}_k\|_{\hat{\mathcal{X}}}^2-\|\hat{\mathbf{x}}_k\|_{\hat{\mathcal{X}}}^2=\|\hat{\mathbf{x}}_{k+1}\|_{\hat{\mathcal{X}}}^2-\|\hat{\mathbf{x}}_k\|_{\hat{\mathcal{X}}}^2\leq\|\hat{\mathbf{u}}_k\|^2-\|\hat{\mathbf{y}}_k\|^2=\|\hat{\mathbf{u}}_k\|^2-\|\hat{\mathbf{C}}\hat{\mathbf{x}}_k+\hat{\mathbf{D}}\hat{\mathbf{u}}_k\|^2,
\]
and therefore
\[
\left\|\begin{bmatrix}
    \hat{\mathbf{A}}& \hat{\mathbf{B}}\\ \hat{\mathbf{C}}& \hat{\mathbf{D}}
\end{bmatrix} \begin{bmatrix} \hat{\mathbf{x}}_k\\ \hat{\mathbf{u}}_k\end{bmatrix}\right\|_{\hat{\mathcal{X}}}^2\leq \left\|\begin{bmatrix} \hat{\mathbf{x}}_k\\ \hat{\mathbf{u}}_k\end{bmatrix}\right\|_{\hat{\mathcal{X}}}^2 \quad \text{or, equivalently,} \quad 
\left\|\begin{bmatrix}
    \hat{\mathbf{A}}& \hat{\mathbf{B}}\\ \hat{\mathbf{C}}& \hat{\mathbf{D}}
\end{bmatrix}\right\|_{\hat{\mathcal{X}}}\leq 1,
\]
which means that the interconnected system $(\hat{\mathbf{A}},\hat{\mathbf{B}},\hat{\mathbf{C}},\hat{\mathbf{D}})$ is scattering pH according to  \eqref{def:scat_pH}. 

We proceed with the proof of (c). In this case the scattering pH systems are given by 
\[
\begin{bmatrix}x_{1,k+1}\\y_{1,k}^1\end{bmatrix}=\begin{bmatrix}
   A_1 & B_1\\ C_1 & D_1 
\end{bmatrix}\begin{bmatrix}
    x_{1,k}\\ u_{1,k}^1
\end{bmatrix}, \quad \begin{bmatrix}x_{2,k+1}\\y_{2,k}^1\end{bmatrix}=\begin{bmatrix}
  A_2 & B_2\\ C_2 & D_2 
\end{bmatrix}\begin{bmatrix}
    x_{2,k}\\ u_{2,k}^1 \end{bmatrix},
\]
and since $\left\|\begin{smallbmatrix}A_i & B_1\\ C_i & D_i\end{smallbmatrix}\right\|_{\mathcal{X}_i}\leq 1$ holds for $i=1,2$, we obtain the closed loop system 
\begin{align}
    \label{eq:closedloop}
\begin{bmatrix}
x_{1, k+1}\\x_{2,k+1}
\end{bmatrix}=\begin{bmatrix}
    A_1+B_1D_2(I-D_1D_2)^{-1}C_1 & B_1C_2+B_1D_2(I-D_1D_2)^{-1}D_1C_2 \\ B_2(I-D_1D_2)^{-1}C_1&A_2+B_2(I-D_1D_2)^{-1}D_1C_2
\end{bmatrix}\begin{bmatrix}
x_{1,k}\\x_{2,k}
\end{bmatrix}.
\end{align}
Adding the dissipation inequalities of the systems implies 
\begin{align*}
\|x_{1,k+1}\|_{\mathcal{X}_1}^2+\|x_{2,k+1}\|_{\mathcal{X}_2}^2&\leq \|x_{1,k}\|_{\mathcal{X}_1}^2+\|u_{1,k}\|^2-\|y_{1,k}\|^2+\|x_{2,k}\|_{\mathcal{X}_1}^2+\|u_{2,k}\|^2-\|y_{2,k}\|^2\\ &\leq \|x_{1,k}\|_{\mathcal{X}_1}^2+\|x_{2,k}\|_{\mathcal{X}_2}^2,
\end{align*}
which implies that the coefficient matrix of the closed loop system \eqref{eq:closedloop} is a contraction.

\end{proof}

\section{Conclusion}
In this paper, we presented a geometric approach to scattering passive discrete time pH systems which is based on contractive subspaces. It also discussed the interconnection of discrete time pH systems and we showed that the interconnection of two pH systems results in another pH system. We also had a closer look at the Redheffer interconnection and showed that the interconnection in this case also results in a pH system. 

\section*{Acknowledgement}
The work of K.~Cherifi has been supported by ProFIT (co-financed by the Europäischen Fonds für regionale Entwicklung (EFRE)) within the WvSC project: EA 2.0 - Elektrische Antriebstechnik (project No. 10167552). 
The work of H.~Gernandt has been supported by the Deutsche Forschungsgemeinschaft (DFG, German Research Foundation) within the Priority Programme 1984 ``Hybrid and multimodal energy systems'' (Project No.~ 361092219) and the Wenner-Gren Foundation.  The work of D.~Hinsen and V.~Mehrmann has been supported by the Deutsche Forschungsgemeinschaft (DFG, German Research Foundation) CRC 910 \emph{Control of self-organizing nonlinear systems: Theoretical methods and concepts of application}: Project No.~163436311 and by Bundesministerium für Bildung und Forschung (BMBF)  EKSSE: Energieeffiziente Koordination und Steuerung des Schienenverkehrs in Echtzeit (grant no. 05M22KTB).

\bibliographystyle{plain}
\bibliography{pamm-tpl}

\end{document}